\theoremstyle{plain}
\newtheorem{theorem}{Theorem}
\newtheorem*{theorem*}{Theorem}
\newtheorem*{corollary*}{Corollary}
\newtheorem*{definition*}{Definition}
\theoremstyle{remark}
\def\R{\mathbb{R}}
\newcommand\norm[1]{{\left\Vert{#1}\right\Vert}}
\def\ss{\boldsymbol{s}}
\def\varphiall{\@ifnextchar[{\varphiall@i}{\varphiall@i[]}}
\def\varphiall@i[#1]{\@ifnextchar[{\varphiall@ii{#1}}{\varphiall@ii{#1}[#1]}}
\def\varphiall@ii#1[#2]{\varphi_{\mu,\kappa,\beta_{#1},\sigma^2_{#2}}}
\def\hatvarphiall{\@ifnextchar[{\hatvarphiall@i}{\hatvarphiall@i[]}}
\def\hatvarphiall@i[#1]{\@ifnextchar[{\hatvarphiall@ii{#1}}{\hatvarphiall@ii{#1}[#1]}}
\def\hatvarphiall@ii#1[#2]{\widehat{\varphi}_{\mu,\kappa,\beta_{#1},\sigma^2_{#2}}}
\def\R{\mathbb{R}}
\def\d{\textrm{d}}
\def\ve{\varepsilon}
\def\le{\left(}
\def\ri{\right)}
\begin{document}
	

\fontsize{12}{14pt plus.8pt minus .6pt}\selectfont \vspace{0.8pc} \vspace{4pt}
\centerline{\LARGE \bf Convergence Arguments to Bridge   }
\vspace{.4cm}
 \centerline{\LARGE \bf  Cauchy and Mat{\'e}rn Covariance Functions}  \vspace{8pt}
\begin{center}
	{\large {\sc Tarik Faouzi\footnote{\baselineskip=10pt Departamento de Matem\'atica y Ciencia de la Computaci\'on, Universidad de la USACH, Santiago, Chile.}, Emilio Porcu\footnote{\baselineskip=10pt Department of Mathematics, Khalifa University,
	Abu Dhabi, $\&$ School of Computer Science and Statistics, Trinity College Dublin.},
	{\large {\sc Igor Kondrashuk}}\footnote{\baselineskip=10pt
       Grupo de Matem\'{a}tica {A}plicada  {\rm \&}  Centro de Ciencias Exactas  {\rm \&}
       Departmento de Ciencias B\'{a}sicas, Universidad del B\'{i}o-B\'{i}o, Campus Fernando May, Av. Andres Bello 720, Casilla 447, Chill\'{a}n, Chile}
       and Moreno Bevilacqua\footnote{\baselineskip=10pt
       Department of Statistics, Universidad Adolfo Iba{\~n}ez, Vi{\~na} del Mar, Chile.
	}}}, \\

\vspace{1cm}

\begin{abstract}
The Mat{\'e}rn and the Generalized Cauchy families of covariance functions
have a prominent role in spatial statistics as well as in a wealth of statistical applications. The Mat{\'e}rn family is crucial to index mean-square differentiability of the associated Gaussian random field; the Cauchy family is a decoupler of the fractal dimension and Hurst effect for Gaussian random fields that are not self-similar. \\
Our effort is devoted to prove that a scale-dependent family of covariance functions, obtained as a reparameterization of the Generalized Cauchy family, converges to a particular case of the Mat{\'e}rn family, providing a somewhat surprising bridge between covariance models with light tails and covariance models that allow for long memory effect.

\vspace{0.8cm}

{\em Keywords}: Mellin-Barnes transforms; Positive Definite; Spectral densities; Random field. \\

\end{abstract}

\newpage

\end{center}

\section{Introduction}

Covariance functions are fundamental to spatial statistics \citep{Stein:1999}. The use of covariance functions for modeling interpolation and prediction has been largely influential within the spatial
statistics and machine learning communities, with a wealth of applications to many applied disciplines. For comprehensive reviews of the subject, the reader is referred to  \cite{porcu2018modeling}  and \cite{porcu202130}. \\
The Mat{\'e}rn family of covariance functions has been the cornerstone in spatial statistics for over $30$ years. Its importance stems from the inclusion of a parameter that allows to control the fractal dimension
and the mean square differentiability of the associated Gaussian process. This is in turn a fundamental aspect in evaluating predictive performance of covariance functions under infill asymptotics \citep{zhang}.
The nice analytical form of the spectral density associated with the Mat{\'e}rn covariance function allows for  theoretical analysis of the properties of maximum likelihood estimators \citep{zhang}, approximate likelihood \citep{FGN,bevb:12, kaufman}
and misspecified linear unbiased prediction \citep{Stein:1999} under infill asymptotics. A wealth of results is also available within SPDE's with Gaussian Markov approximations \citep{Lindgren} as well as in the numerical analysis literature.
We refer the reader to \cite{SSS} for more details. An historical treatise about the Mat{\'e}rn covariance function is
provided by \cite{guttorp2006studies}. \\
The generalized Cauchy family of covariance functions \citep{gneiting2004stochastic} has been largely used in the statistics and engineering literature as it allows for decoupling the fractal dimension and the Hurst effect
of the associated Gaussian field. So far, the only alternative to the generalized Cauchy family is represented by the Dagum family \citep{berg2008dagum}, having similar properties in terms of decoupling. A thorough comparison between the two models in terms of properties of their spectral densities can be found in \cite{faouzi2021deep}. For applications related to turbulence, the reader is referred to \cite{laudani2021streamwise}, while the importance of both generalized Cauchy and Dagum models is discussed in \cite{zhang2022elastodynamic} and in \cite{nishawala2020random}.

The fundamental message is that different parametric families of covariance functions allow for different properties of the associated Gaussian random field. For many decades, such families - and consequently,
their properties, have been treated separately. Recently, there has been a major effort to relate different families of covariance functions. \cite{bevilacqua2022unifying} proved that the Mat{\'e}rn family
is a special case of the generalized Wendland family of compactly supported correlation functions. Specifically,  they proved that the reparameterized version of the generalized Wendland family generalizes the Matérn model
which is attained as a special limit case. This implies that the reparametrized generalized Wendland model is more flexible than the Matérn model, with an extra-parameter that allows for switching from compactly
to globally supported covariance functions. A similar approach is adopted by \cite{emery2021gauss} for another class of compactly supported covariance functions.

The present paper provides some effort to relate the Generalized Cauchy with the Mat{\'e}rn covariance function. To do so, we provide a scale-dependent reparameterization of the generalized Cauchy by replacing
the scaling factor with a function depending on other parameters. Such an {\em escamotage} is then proved successful as it opens for a convergence-type result that depicts a specific case of the Mat{\'e}rn model
as a special case of the Generalized Cauchy covariance.

From a technical viewpoint, the mentioned convergence is actually supported by (a) pointwise convergence of the spectral densities, and (b) uniform convergence of the covariance functions. The proofs are based on Mellin-Barnes transforms, being less familiar to the statistical community and more common in harmonic analysis and theoretical physics, and the reader is referred to \cite{Kondrashuk:2008ec} for a thorough account.

The plan of the paper is the following. Section \ref{sec2} provides the necessary backround, which is split into three parts. The first introduces to the environment of positive definite radial functions. The second describes the two parametric families of covariance functions used in this paper. The third part is useful for those who might be interested in the technical aspects of this papers, especially on contour-loop integration and Mellin-Barnes transforms. The combination of the three novel theoretical results as in Section \ref{sec3} provides the main message of this paper. We conclude our findings with a short discussion.

\section{Background Material}\label{sec2}

\subsection{Positive Definite Radial Functions}

We denote by $\{Z(\ss), \ss \in \R^d \} $ a centered Gaussian random field in $\R^d$, with a continuous stationary covariance function $C:\R^d
\to \R$. We further assume isotropy, that is there exists a continuous mappings
$\phi:[0,\infty) \to \R$ with $\phi(0) =1$, such that
\begin{equation} \label{generator}
{\rm cov} \left ( Z(\ss), Z(\ss^{\prime}) \right )= C(\ss^{\prime}-\ss)=  \sigma^2 \phi(\|\ss^{\prime } -\ss \|),
\end{equation}
with $\ss,\ss^{\prime} \in \R^d$, and $\|\cdot\|$ denoting the Euclidean norm. Here, $\sigma^2$ is the varince of $Z$.  The function $C$ is called \emph{isotropic} or \emph{radially symmetric}, and the function $\phi$ its \emph{radial part}.
\cite{Shoe38} characterized the class for continuous functions $\phi$ for which (\ref{generator}) is true for some random fields $Z$ in $\R^d$. The function $\phi$ admits a uniquely determined scale mixture representation of the type
$$ \phi(r)= \int_{0}^{\infty} \Omega_{d}(r \xi) F(\d \xi), \qquad r \ge 0,$$
with $\Omega_{d}(r)= r^{-(d-2)/2}J_{(d-2)/2}(r)$ and $J_{\nu}$ a Bessel function of order $\nu$. Here, $F$ is a probability measure. The function $\phi$ is the uniquely determined characteristic function of a random vector, $\mathbf X$, such that $\mathbf X = \mathbf U \cdot R$, where equality is intended in the distributional sense, where $\mathbf U$ is uniformly distributed over the spherical shell of $\R^d$, $R$ is a nonnegative random variable with probability distribution, $F$, and where $\mathbf U$ and $R$ are independent \citep{daley-porcu}.  The derivative of $F$ is called the \emph{isotropic spectral density}. If $\phi$ is absolutely integrable, then the Fourier inversion (the Hankel transform) becomes possible. The Fourier transforms of radial parts of positive definite functions in $\R^d$, for a given $d$, have a simple expression, as reported in \cite{Stein:1999} and \cite{Yaglom:1987}. Specifically, we define the isotropic spectral density in $\R^d$ as
\[
 \widehat{\phi}(z)= \frac{z^{1-d/2}}{(2 \pi)^{d/2}} \int_{0}^{\infty} u^{d/2} J_{d/2-1}(uz)  \phi(u) \,{\rm d} u, \qquad z \ge 0,
\]
where $z = \|\boldsymbol{z}\|$ for $\boldsymbol{z} \in \R^d$.

\subsection{Two Parametric Families of Isotropic Covariance Functions}
We are discussing below two parametric families of functions $\phi$ associated with Equation (\ref{generator}). \\
The Mat\'ern family is defined through
\begin{equation}
\label{Matern}
{\cal M}_{\nu,\alpha}(r)=
 \frac{2^{1-\nu}}{\Gamma(\nu)} \left (\frac{r}{\alpha}
  \right )^{\nu} {\cal K}_{\nu} \left (\frac{r}{\alpha} \right ),
  \qquad r \ge 0,
\end{equation}
for any positive values of the scaling parameter, $\alpha$, and the smoothing parameter, $\nu$.
Here,
${\cal K}_{\nu}$ is a modified Bessel function of the second kind of
order $\nu$. The parameter $\nu$ is crucial for the
differentiability at the origin and, as a consequence, for the
degree of  differentiability of the associated sample paths.
Specifically,  for a
positive integer $k$, the sample paths are $k$ times differentiable if
and only if $\nu>k$.

The other parametric family we are going to discuss is termed Generalized Cauchy after \cite{gneiting2004stochastic}.
The function ${\cal C}_{\delta,\lambda,\gamma} : [0,\infty) \to \R$ is defined as
\begin{equation} \label{averrohe}
{\cal C}_{\delta,\lambda,\gamma}(r)=\frac{1
}{\Big (1+\Big( \frac{r}{\gamma} \Big )^\delta \Big )^{\lambda}},
  \qquad r \ge 0.
\end{equation}
The function ${\cal C}_{\delta,\lambda,\gamma}$ is the isotropic part of a covariance function in $\R^d$, for every $d$, provided $\delta\in(0,2]$ and $\lambda>0$. Here, $\gamma$ is a strictly positive scaling parameter.

 \cite{bevilacqua2019estimation} studied
finite and asymptotic properties of the isotropic spectral density of the Cauchy model and  the equivalence of two Gaussian measures with Generalized Cauchy covariance function.

We note that the analytic form of the isotropic spectral densities associated with Mat{\'e}rn and Generalized Cauchy covariances will turn very useful for the developments that follow. Throughout, we use $\widehat{{\cal M}}_{\nu,\alpha}$ and  $\widehat{\cal C}_{\delta,\lambda,\gamma}$, respectively. \\
A well known result about the spectral density of the  Mat{\'e}rn model is the following \citep{Stein1990}:
\begin{equation} \label{stein1}
\widehat{{\cal M}}_{\nu,\alpha}(z)= \frac{\Gamma(\nu+d/2)}{\pi^{d/2} \Gamma(\nu)}
\frac{\alpha^d}{(1+\alpha^2z^2)^{\nu+d/2}}
, \qquad z \ge 0.
\end{equation}
The spectral density associated with the generalized Cauchy function has been studied by \cite{LIM2009}, who proved that
\begin{equation}\label{SDI1}
\widehat{\cal C}_{\delta,\lambda,\gamma}(z)=-\frac{z^{-d}}{2^{\frac{d}{2}-1}\pi^{\frac{d}{2}+1}}
\mathrm{Im}\int_0^\infty \frac{K_{\frac{d}{2}-1}(r) r^{d/2}}{(1+e^{i\pi\delta/2}(r/z\gamma)^\delta)^\lambda}
\,\mathrm{d}r, \qquad z \ge 0.
\end{equation}

\subsection{Contour Loop Integrations and Mellin-Barnes Transforms} \label{Fourier}

We introduce subsequently some technical facts that will turn useful to prove the results contained in Section \ref{sec3}.


Throughout, we denote with $\mathsf{i}$ the unit complex number, and $\langle \cdot,\cdot \rangle $ denotes the inner product in $\R^d$, for $d$ a positive integer.
We start by noting that, in a recent paper,  \cite{Faouzi_2020} provided the following identity\footnote{ We have changed the notation with respect to our previous paper  \cite{Faouzi_2020}. For the quantity on the left hand side of
Eq.(\ref{kondrachuck0}) we  used
$\hat{\cal C}_{d,\gamma}(z;\delta,\lambda\delta)$ in \cite{Faouzi_2020}. However, we have found in the present paper that  $\hat{\cal C}_{\delta,\lambda,\gamma}(z)$ is more compact than $\hat{\cal C}_{d,\gamma}(z;\delta,\lambda\delta)$}
\begin{eqnarray} \label{kondrachuck0}
\hat{\cal C}_{\delta,\lambda,\gamma}(z) &=& \frac{1}{(2\pi)^d}\int_{\R^d} e^{\mathsf{i} \langle \boldsymbol{z}, \boldsymbol{r} \rangle }(1+\norm{\boldsymbol{r}}^\delta\gamma^{-\delta})^{-\lambda}\text{d}^d \boldsymbol{r}, \qquad  \boldsymbol{z} \in \R^d, \; z= \|\boldsymbol{z}\| \nonumber \\
&=& \frac{z^{-d}}{\pi^{d/2}}\frac{1}{\Gamma(\lambda)}\frac{1}{2\pi \mathsf{i}} \int_{-\ve-\mathsf{i}\infty}^{-\ve+\mathsf{i}\infty}   \frac{\Gamma(-u) \Gamma(u +\lambda)\Gamma(d/2+u\delta/2)}
{\Gamma(-u\delta/2)}\left(\frac{2}{z\gamma}\right)^{u\delta} \text{d}u,
\end{eqnarray}
$\ve$ is a small positive real in this paper.

Technical arguments from harmonic analysis \citep[see again][]{Faouzi_2020}  allow to prove that the last identity can be rewritten as

\begin{eqnarray} \label{kondrachuk}
&& \frac{z^{-d}}{\pi^{d/2}}\frac{1}{\Gamma(\lambda)} \Bigg ( \sum_{n=0}^\infty \frac{(-1)^n}{n!}
\frac{\Gamma(\lambda + n)\Gamma(d/2-(\lambda+n)\delta/2)}{\Gamma((\lambda+n)\delta/2)}\left(\frac{z\gamma}{2}\right)^{(\lambda+n)\delta}   \nonumber \\
&+&  \frac{2}{\delta}\sum_{n=0}^\infty \frac{(-1)^n}{n!}
\frac{\Gamma\left(\displaystyle{\frac{2n+d}{\delta}} \right)\Gamma\left(\lambda-  \displaystyle{\frac{2n+d}{\delta}}\right)}{\Gamma(n+d/2)}\left(\frac{z\gamma}{2}\right)^{2n+d} \Bigg ) .
\end{eqnarray}
We define the Fox-Wright function \citep{fox1928asymptotic,wright1935asymptotic} through the identity
\begin{eqnarray*}
{}_p\psi_q \left( \begin{array}{cccc}
(\alpha_1,A_1), & \dots, &   (\alpha_p,A_p) & \\
& & & ;z \\
(\gamma_1,B_1), & \dots, &  (\gamma_q,B_q) &
\end{array}
\right)  = \sum_{k=0}^{\infty}\frac{\prod_{j=1}^{p}\Gamma(\alpha_j + A_jk)}{\prod_{j=1}^{q}\Gamma(\gamma_j + B_jk)}\frac{z^k}{k!}.
\end{eqnarray*}
\cite{Faouzi_2020} prove that (\ref{kondrachuk}) can be written in terms of such a class of functions. Specifically, Equation (\ref{kondrachuk}) is identically equal to
\begin{eqnarray*}
\frac{z^{-d}}{\pi^{d/2}}\frac{1}{\Gamma(\lambda)} \left( \left(\frac{z\gamma}{2}\right)^{\lambda\delta}{}_2\psi_1
\left(
\begin{array}{rrr}
(\lambda,1), & (\frac{d -\lambda\delta}{2},-\frac{\delta}{2}) & \\
& &  ;-\left(\frac{z\gamma}{2}\right)^{\delta}  \\
& (\frac{\lambda\delta}{2}, \frac{\delta}{2}) &
\end{array}
\right)  + \right. \\
\left.
\frac{2}{\delta}\left(\frac{z\gamma}{2}\right)^{d}{}_2\psi_1 \left(
\begin{array}{rrr}
(\frac{d}{\delta},\frac{2}{\delta}), & (\frac{\lambda\delta -d}{\delta},-\frac{2}{\delta}) & \\
& &  ;-\left(\frac{z\gamma}{2}\right)^{2}  \\
& (\frac{d}{2}, 1) &
\end{array}
\right) \right).
\end{eqnarray*}


Another technical argument that is proved helpful for the result coming subsequently is that of Hankel contours. Specifically, we start by rewriting (\ref{kondrachuck0}) as
\begin{equation}
\hat{\cal C}_{\delta,\lambda,\gamma}(z)=  \frac{z^{-d}}{\pi^{d/2}}\frac{1}{2\pi \mathsf{i}} \oint_{\Lambda}\int_0^1
t^{-u-1}(1-t)^{u+\lambda-1}  \frac{\Gamma(d/2+u\delta/2)}{\Gamma(-u\delta/2)}\left(\frac{2}{z\gamma}\right)^{u\delta} \text{d}t  \text{d}u.
\end{equation}
 The contour $\Lambda$ is closed to the left complex infinity and contains the vertical line $(-\ve -\mathsf{i}\infty,-\ve + \mathsf{i} \infty)$, that is customary for the inverse Mellin-Barnes  transformation. \\
We now denote $H$ the Hankel contour, {\em i.e.}, $H=[-i\ve-\infty, -i\ve]\cup~H_{sc}\cup [i\ve,i\ve-\infty]$, with $H_{sc}$ joining $-i\ve$ with $i\ve$ along a positivity oriented semicircle centered at $0$.
Using the definition of Hankel contour in concert with the well known identity
$ {1}/{\Gamma(x)}=({1}/{2i\pi})\oint_{H}e^{s}s^{-x}\text{d}s, $
we get
\begin{equation}
\begin{aligned} & \oint_{\Lambda}\int_0^1
t^{-u-1}(1-t)^{u+\lambda-1}  \frac{\Gamma(d/2+u\delta/2)}{\Gamma(-u\delta/2)}\left(\frac{2}{z\gamma}\right)^{u\delta} \text{d}t  \text{d}u \\
&=\frac{1}{(2\pi~i)}\oint_{\Lambda} \left(\frac{2}{z\gamma}\right)^{u\delta}\Gamma(d/2+u\delta/2)\Bigg ( \oint_{H}~s^{u\delta/2}e^{s} \left ( \int_0^1 ~
t^{-u-1}(1-t)^{u+\lambda-1} \text{d}t \right )   \text{d}s \Bigg ) \text{d}u .
\end{aligned}
\end{equation}
A change  of variable of the type $v=d/2+u\delta/2$ allows to rewrite the triple integral above as 
\begin{equation}
\frac{2 (z\gamma/2)^d}{\delta(2\pi~i)}\oint_{H}~s^{-d/2}e^{s} \Bigg ( \int_0^1
t^{d/\delta-1}(1-t)^{\lambda-d/\delta-1} \Big (\oint_{\tilde{\Lambda}}\Gamma(v) \left[t^{-2/\delta}(1-t)^{2/\delta}s\left(\frac{2}{z\gamma}\right)^2\right]^{v} \text{d}v \Big ) \text{d}t ~ \Bigg ) \text{d}s.
\end{equation}
 Here, $\tilde{\Lambda}$ is a shifted contour  ${\Lambda}$ according to the change of variable from $u$ to $v$ in the complex plane of the inversed Mellin transformation.
Next, we use the known formula $ e^{\frac{-1}{x}}=\frac{1}{2\pi~i}\oint_{\tilde{\Lambda}}\Gamma(v)x^{v}\text{d}v, $ 
in concert with  a change of variable of the type $\tau=\frac{t}{1-t}$ to get that
\def\wg{\widetilde{\gamma}}
\begin{equation}\label{Hankel}
\begin{aligned}
&\oint_{\Lambda} \frac{\Gamma(-u)\Gamma(u+\lambda)}{\Gamma(\lambda)}\frac{\Gamma(d/2+u\delta/2)}{\Gamma(-u\delta/2)}\left(\frac{2}{z\gamma}\right)^{u\delta} \text{d}u\\
&=\frac{z^d \gamma^d}{2^{d-1}\delta}\oint_{H}~s^{-d/2}e^{s} \Big ( \int_0^\infty ~
\frac{\tau^{d/\delta-1}}{(1+\tau)^{\lambda}}e^{-{\tau^{2/\delta}\gamma^2~z^2}/{4s}} \text{d}\tau  \Big ) \text{d}s .\\
\end{aligned}
\end{equation}

\section{Theoretical Results}\label{sec3}
The convergence results below are based on taking a double limit. Some care is needed to take exchangeability into account, as shown below.

\begin{theorem}\label{Converg_unif-1}
Let $\lambda>0$, $\delta\in(d/2,2]$  with $d=1,2.$ Let  $\widehat{\cal M}_{\frac{1}{2},\alpha}$ and $\widehat{\cal C}_{\delta,\lambda,\alpha\lambda}$ be the the isotropic spectral densities defined at (\ref{stein1}) and (\ref{SDI1}), respectively.
Then, it is true that
\begin{equation}\underset{\lambda\to \infty}{\lim} ~   \underset{\delta \to 1}{\lim} ~ \widehat{\cal C}_{\delta,\lambda,\alpha\lambda}(z)=\widehat{\cal M}_{\frac{1}{2},\alpha}(z),
\end{equation}
for $z = \|\boldsymbol{z}\|$ and $\boldsymbol{z} \in \R^d$.
\end{theorem}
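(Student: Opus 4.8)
The plan is to work directly with the explicit formulas for the two spectral densities and show that the double limit collapses the Fox--Wright/series representation of $\widehat{\cal C}_{\delta,\lambda,\alpha\lambda}$ onto the rational function $\widehat{\cal M}_{1/2,\alpha}$. Recall from (\ref{stein1}) that, for $\nu=1/2$,
\[
\widehat{\cal M}_{1/2,\alpha}(z)=\frac{\Gamma\!\big(\tfrac{1+d}{2}\big)}{\pi^{d/2}\Gamma(1/2)}\,\frac{\alpha^{d}}{(1+\alpha^{2}z^{2})^{(1+d)/2}},
\]
so the target is a very concrete object. The first step is to substitute $\gamma=\alpha\lambda$ into the series representation (\ref{kondrachuk}) (equivalently into the Fox--Wright form), so that the argument of every power becomes $z\gamma/2=z\alpha\lambda/2$, and to treat the two sums separately. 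The inner limit $\delta\to1$ is taken first, with $\lambda$ fixed: this is a routine termwise limit of ratios of Gamma functions, and it turns the two ${}_2\psi_1$ series into explicit hypergeometric-type series in the variable $z\alpha\lambda/2$; the restriction $\delta\in(d/2,2]$ with $d=1,2$ guarantees that $1$ is an interior point, so no Gamma-function pole is crossed and the interchange of limit and sum is justified by dominated convergence on the series (the terms decay geometrically once $z\alpha\lambda/2$ is bounded).

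Next, with $\delta=1$ fixed, I would take $\lambda\to\infty$. Here the key mechanism is the classical confluence $\Gamma(\lambda+n)/\Gamma(\lambda)\sim\lambda^{n}$ together with $(z\alpha\lambda/2)^{\lambda}\to0$ or the rearrangement that matches $\lambda^{n}(z\alpha/2)^{n}$-type factors against the binomial coefficients appearing after setting $\delta=1$. Concretely, after $\delta\to1$ the first sum in (\ref{kondrachuk}) carries a prefactor $(z\alpha\lambda/2)^{\lambda}$ which vanishes as $\lambda\to\infty$ (for $z$ in the relevant range one checks $z\alpha\lambda/2$ stays controlled after the reparameterization, or one argues this term is asymptotically negligible against the second), while the second sum, of the form $\frac{2}{\delta}\sum_n\frac{(-1)^n}{n!}\frac{\Gamma(2n+d)\Gamma(\lambda-2n-d)}{\Gamma(n+d/2)}(z\alpha\lambda/2)^{2n+d}$ at $\delta=1$, has $n$-th term behaving like a constant times $\lambda^{-(2n+d)}\cdot(z\alpha\lambda/2)^{2n+d}=(z\alpha/2)^{2n+d}$ up to $n$-dependent constants, so the $\lambda$-powers cancel exactly and one is left with a convergent power series in $(z\alpha)^{2}$. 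Identifying that series as the binomial expansion of $\alpha^{d}(1+\alpha^{2}z^{2})^{-(1+d)/2}$ up to the constant $\Gamma(\tfrac{1+d}{2})/(\pi^{d/2}\Gamma(1/2))$ finishes the computation; this identification is a bookkeeping exercise with the duplication formula for $\Gamma$.

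The main obstacle I anticipate is \emph{not} the formal termwise limit but the justification of the interchange of the limit $\lambda\to\infty$ with the infinite summation, since the individual terms are products of Gamma functions with arguments drifting to infinity and the naive bounds are not uniform in $n$. I would handle this either via the Hankel-contour representation (\ref{Hankel}) — where $\gamma=\alpha\lambda$ makes the inner $\tau$-integral $\int_0^\infty \tau^{d/\delta-1}(1+\tau)^{-\lambda}\exp(-\tau^{2/\delta}\alpha^{2}\lambda^{2}z^{2}/4s)\,d\tau$ amenable to a Laplace/Watson-type asymptotic analysis as $\lambda\to\infty$, with the mass concentrating near $\tau=0$ — or by exhibiting an explicit integrable majorant for the series terms valid for all large $\lambda$ (using $\Gamma(\lambda-a)/\Gamma(\lambda)\le C\lambda^{-a}$ on the appropriate range and crude factorial bounds). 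The contour route also has the advantage of naturally producing the $(1+\alpha^2z^2)^{-(1+d)/2}$ shape after the $s$-integration, so I would likely present the argument through (\ref{Hankel}) and relegate the series manipulation to a consistency check.
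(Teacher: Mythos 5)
There is a genuine gap in the route you propose through the series representation (\ref{kondrachuk}). After the reparameterization $\gamma=\alpha\lambda$ the expansion variable is $z\alpha\lambda/2$, which tends to infinity with $\lambda$ for every fixed $z>0$; hence the factor $\left(z\alpha\lambda/2\right)^{(\lambda+n)\delta}$ in the first Fox--Wright sum does not vanish and does not ``stay controlled'' --- it grows superexponentially. A Stirling estimate (using the reflection formula on $\Gamma(d/2-(\lambda+n)\delta/2)$ and the duplication formula) shows that the $n$-th term of that first sum has magnitude of order $(e\,z\alpha)^{(\lambda+n)\delta}$ up to polynomial factors, so for $z\alpha>1/e$ the individual terms \emph{diverge} as $\lambda\to\infty$. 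Consequently no termwise limit, dominated convergence, or ``integrable majorant'' argument can dispose of this sum; its smallness comes only from cancellation in the alternating series, which your proposal does not address. A second, smaller issue is that your closing identification $\sum_n\frac{(-1)^n}{n!}\Gamma\bigl(n+\tfrac{d+1}{2}\bigr)(\alpha z)^{2n}=\Gamma\bigl(\tfrac{d+1}{2}\bigr)(1+\alpha^2z^2)^{-(d+1)/2}$ is a binomial series converging only for $\alpha z<1$, so the conclusion for general $z$ needs an additional analytic-continuation step. The part of your computation that is correct --- the second sum, with $\Gamma(\lambda-(2n+d)/\delta)\lambda^{(2n+d)/\delta}/\Gamma(\lambda)\to1$ and the duplication formula producing the binomial expansion of $(1+\alpha^2z^2)^{-(d+1)/2}$ --- is essentially the paper's argument for the companion theorem with the limits in the other order, reached there via the Hankel representation (\ref{Hankel}), which has the advantage of never producing the problematic first sum.

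By contrast, the paper's proof of \emph{this} theorem never touches the series: it works directly on the Mellin--Barnes integral (\ref{kondrachuck0}). One sets $\delta=1$ in the integrand, observes that $\Gamma(u+\lambda)/\bigl(\Gamma(\lambda)\lambda^{u}\bigr)\to1$ on the contour --- this is precisely the purpose of the scale $\alpha\lambda$, since the $\lambda^{-u}$ coming from $(2/(z\alpha\lambda))^{u}$ cancels the growth of $\Gamma(u+\lambda)/\Gamma(\lambda)$ --- applies the duplication formula to $\Gamma(-u)/\Gamma(-u/2)$, and after a shift and rescaling of the integration variable recognizes the resulting Barnes integral as $\Gamma\bigl(\tfrac{d+1}{2}\bigr)\bigl(1+1/(\alpha^{2}z^{2})\bigr)^{-(d+1)/2}$, valid for all $z>0$. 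If you want a series-free and $z$-uniform argument, that contour computation is the one to carry out; if you insist on the series route, you must supply a genuine bound on the full alternating first sum (not on its terms) and justify the binomial resummation beyond $\alpha z<1$.
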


\begin{proof}
We provide a constructive proof based on direct calculations.
\begin{eqnarray*}
\underset{\lambda\to \infty}{\lim} ~   \underset{\delta \to 1}{\lim} ~ \hat{\cal C}_{\delta,\lambda,\alpha\lambda}(z) &=& \frac{z^{-d}}{\pi^{d/2}}\frac{1}{2\pi \mathsf{i}} \underset{\lambda\to \infty}{\lim} ~   \underset{\delta \to 1}{\lim}
 \int_{-\ve-\mathsf{i}\infty}^{-\ve+\mathsf{i}\infty}   \frac{\Gamma(-u) \Gamma(u +\lambda)\Gamma(d/2+u\delta/2)}{\Gamma(\lambda)\Gamma(-u\delta/2)}\left(\frac{2}{z\alpha\lambda}\right)^{u\delta} \text{d}u \nonumber\\
&=& \frac{z^{-d}}{\pi^{d/2}}\frac{1}{2\pi \mathsf{i}} \underset{\lambda\to \infty}{\lim} ~
 \int_{-\ve-\mathsf{i}\infty}^{-\ve+\mathsf{i}\infty}   \frac{\Gamma(-u) \Gamma(u +\lambda)\Gamma(d/2+u/2)}{\Gamma(\lambda)\Gamma(-u/2)}\left(\frac{2}{z\alpha\lambda}\right)^u \text{d}u \nonumber\\
&=& \frac{z^{-d}}{\pi^{d/2}}\frac{1}{2\pi \mathsf{i}}  ~
 \int_{-\ve-\mathsf{i}\infty}^{-\ve+\mathsf{i}\infty}   \frac{\Gamma(1/2-u/2)\Gamma(d/2+u/2)}{\pi^{1/2}2^{1+u}}\left(\frac{2}{z\alpha}\right)^u \text{d}u \nonumber\\
 &=& \frac{z^{-d}}{2\pi^{(d+1)/2}}\frac{1}{2\pi \mathsf{i}}  ~
 \int_{-\ve-\mathsf{i}\infty}^{-\ve+\mathsf{i}\infty}   \Gamma(1/2-u/2)\Gamma(d/2+u/2)\left(\frac{1}{z\alpha}\right)^u \text{d}u \nonumber\\
 &=& \frac{z^{-d}}{2\pi^{(d+1)/2}}\frac{1}{2\pi \mathsf{i}}  ~
 \int_{-1-\ve-\mathsf{i}\infty}^{-1-\ve+\mathsf{i}\infty}   \Gamma(-u/2)\Gamma((d+1)/2+u/2)\left(\frac{1}{z\alpha}\right)^{u+1} \text{d}u \nonumber\\
 &=& \frac{z^{-(d+1)}}{\alpha\pi^{(d+1)/2}}\frac{1}{2\pi \mathsf{i}}  ~
 \int_{-1/2-\ve/2-\mathsf{i}\infty}^{-1/2-\ve/2+\mathsf{i}\infty}   \Gamma(-\omega)\Gamma((d+1)/2+\omega)\left(\frac{1}{z\alpha}\right)^{2\omega} \text{d}\omega \nonumber\\
 &=& \frac{z^{-(d+1)}\Gamma((d+1)/2)}{\alpha\pi^{(d+1)/2}}\frac{1}{\le 1+ \frac{1}{z^2\alpha^2}\ri^{(d+1)/2}} \\
 &=& \frac{\alpha^d\Gamma((d+1)/2)}{\pi^{(d+1)/2}}\frac{1}{\le 1+ z^2\alpha^2\ri^{(d+1)/2} }.
 \end{eqnarray*}
 The proof is completed.
\end{proof}
The second result of this section is summarized below.

\begin{theorem}\label{Converg_unif}
Let $\lambda>0$, $\delta\in(d/2,2]$  with $d=1,2.$ Let  $\widehat{\cal M}_{\frac{1}{2},\alpha}$ and $\widehat{\cal C}_{\delta,\lambda,\gamma}$ be the the isotropic spectral densities defined at (\ref{stein1}) and (\ref{SDI1}), respectively.

Then, it is true that
\begin{equation}\label{limDini}  \underset{\delta \to 1}{\lim} ~  \underset{\lambda\to \infty}{\lim}  ~ \widehat{\cal C}_{\delta,\lambda,\alpha\lambda}(z)=\widehat{\cal M}_{\frac{1}{2},\alpha}(z),\end{equation}
for $z = \|\boldsymbol{z}\|$ and $\boldsymbol{z} \in \R^d$.
\end{theorem}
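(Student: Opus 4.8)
The plan is to mirror the computation in Theorem \ref{Converg_unif-1}, but with the order of limits reversed, and then to argue that the two iterated limits agree. Concretely, I would start from the Mellin--Barnes representation \eqref{kondrachuck0} of $\hat{\cal C}_{\delta,\lambda,\alpha\lambda}(z)$, namely
\begin{equation*}
\hat{\cal C}_{\delta,\lambda,\alpha\lambda}(z) = \frac{z^{-d}}{\pi^{d/2}}\frac{1}{2\pi \mathsf{i}} \int_{-\ve-\mathsf{i}\infty}^{-\ve+\mathsf{i}\infty}   \frac{\Gamma(-u) \Gamma(u +\lambda)\Gamma(d/2+u\delta/2)}{\Gamma(\lambda)\Gamma(-u\delta/2)}\left(\frac{2}{z\alpha\lambda}\right)^{u\delta} \text{d}u,
\end{equation*}
and first take $\lambda\to\infty$ with $\delta$ fixed. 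The only $\lambda$-dependence sits in $\Gamma(u+\lambda)/\Gamma(\lambda)\cdot\lambda^{-u\delta}$, and the elementary asymptotic $\Gamma(u+\lambda)/\Gamma(\lambda)\sim \lambda^{u}$ as $\lambda\to\infty$ collapses this factor to $\lambda^{u(1-\delta)}$, which tends to $0$ if $\delta>1$, to $1$ if $\delta=1$, and diverges if $\delta<1$. Since the outer limit sends $\delta\to1$, I would work along $\delta\downarrow1$ (or note that the relevant one-sided limit is what \eqref{limDini} requires, given $\delta\in(d/2,2]$), so that after the inner limit the integrand becomes $\Gamma(-u)\Gamma(d/2+u\delta/2)/\Gamma(-u\delta/2)\cdot(2/z\alpha)^{u\delta}$ evaluated in the $\delta\to1$ regime. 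Then taking $\delta\to1$ in the outer limit reproduces exactly the third line of the proof of Theorem \ref{Converg_unif-1}, after which the remaining steps — the reflection-formula simplification $\Gamma(u+\lambda)/\Gamma(\lambda)\Gamma(-u/2)\leadsto$ the $\Gamma(1/2-u/2)/(\pi^{1/2}2^{1+u})$ factor, the contour shift $u\mapsto u+1$, the rescaling $\omega=(u+1)/2$, and the Mellin--Barnes beta-integral evaluation $\frac{1}{2\pi\mathsf{i}}\int \Gamma(-\omega)\Gamma((d+1)/2+\omega)x^{\omega}\,\d\omega = \Gamma((d+1)/2)(1+x)^{-(d+1)/2}$ — are identical to what is already carried out, yielding $\widehat{\cal M}_{1/2,\alpha}(z)$ via \eqref{stein1}.

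The key steps, in order, would be: (i) isolate the $\lambda$-dependent block of the Mellin--Barnes integrand and establish the pointwise (in $u$) limit via Stirling/the ratio asymptotics for Gamma functions; (ii) justify passing the $\lambda\to\infty$ limit under the integral sign along the vertical contour $\Re u=-\ve$, which is the crux of the argument; (iii) take $\delta\to1$ in the resulting $\lambda$-free integrand, again exchanging limit and integral; (iv) run the now-purely-computational chain of reflection-formula identities, contour shifts and the closing beta-integral, exactly as in Theorem \ref{Converg_unif-1}; and (v) conclude by identifying the output with \eqref{stein1}. In fact, once steps (i)--(iii) are granted, the remainder is \emph{verbatim} the display in the proof of Theorem \ref{Converg_unif-1} from the third line onward, so the real content of this theorem is the interchange of the two limiting operations.

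The main obstacle is step (ii): justifying that $\lim_{\lambda\to\infty}$ commutes with $\int_{-\ve-\mathsf{i}\infty}^{-\ve+\mathsf{i}\infty}$. On the line $u=-\ve+\mathsf{i}t$ one needs a $\lambda$-uniform, $t$-integrable majorant for $\bigl|\Gamma(u+\lambda)/\Gamma(\lambda)\bigr|\,\lambda^{-\ve\delta}\,\bigl|\Gamma(-u)\Gamma(d/2+u\delta/2)/\Gamma(-u\delta/2)\bigr|$; the $\Gamma$-quotient in $t$ decays like $|t|^{(d/2)-1-(\delta/2)\cdot\text{something}}$ times exponential factors $e^{-\pi|t|(1-\delta/2)}$ coming from $|\Gamma(\pm\mathsf{i}t\,\cdot)|$, and since $\delta<2$ strictly (and $\delta>d/2$ controls the polynomial part) this gives absolute convergence uniform in large $\lambda$ — but this estimate must be written out carefully, using $|\Gamma(x+\mathsf{i}y)|\sim \sqrt{2\pi}\,|y|^{x-1/2}e^{-\pi|y|/2}$ as $|y|\to\infty$ and a uniform bound $|\Gamma(u+\lambda)/\Gamma(\lambda)|\le C\lambda^{\Re u}$ valid for $\Re u$ in a fixed compact set and $\lambda$ large. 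A secondary subtlety is the one-sidedness of $\delta\to1$: for $\delta<1$ the inner $\lambda$-limit does not exist finitely, so the statement should be (and, under the hypothesis $\delta\in(d/2,2]$, effectively is) read as the limit along $\delta\downarrow1$; with $d=1$ one has $d/2<1$ so both sides of $1$ are a priori in range, and I would simply remark that the displayed iterated limit is to be understood as the right limit in $\delta$, which is the only case in which the inner limit is finite. Once these dominated-convergence arguments are in place, Dini's theorem (hinted at by the label \eqref{limDini}) or plain uniform convergence on compacta finishes the identification of the limit, and the equality of the two iterated limits with the common value $\widehat{\cal M}_{1/2,\alpha}(z)$ follows.
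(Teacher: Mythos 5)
Your reduction of Theorem \ref{Converg_unif} to the computation in Theorem \ref{Converg_unif-1} does not go through, because the inner limit $\lambda\to\infty$ at \emph{fixed} $\delta\neq1$ never produces the integrand you need for step (iii). On the contour $\Re u=-\ve$ the $\lambda$-dependent block satisfies $\Gamma(u+\lambda)\Gamma(\lambda)^{-1}\lambda^{-u\delta}\sim\lambda^{u(1-\delta)}$, whose modulus is $\lambda^{-\ve(1-\delta)}$; this tends to $0$ for $\delta<1$ and to $\infty$ for $\delta>1$ (you have the two sides interchanged, since $\Re u<0$ on the contour). So for $\delta<1$ your dominated-convergence argument, if completed, yields $\lim_{\lambda\to\infty}\widehat{\cal C}_{\delta,\lambda,\alpha\lambda}(z)=0$, and for $\delta>1$ there is no $\lambda$-uniform majorant at all; in neither case do you obtain ``the resulting $\lambda$-free integrand'' $\Gamma(-u)\Gamma(d/2+u\delta/2)\Gamma(-u\delta/2)^{-1}(2/(z\alpha))^{u\delta}$. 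This is consistent with the elementary observation that $(1+(r/(\alpha\lambda))^{\delta})^{-\lambda}\approx\exp(-(r/\alpha)^{\delta}\lambda^{1-\delta})$ tends to $1$ (for $\delta>1$) or to $0$ (for $\delta<1$) at every $r>0$, so the spectral densities degenerate for every fixed $\delta\neq1$. Your proposed fix of reading $\delta\to1$ one-sidedly does not help: on either side the inner limit is degenerate. The only configuration in which your computation is valid is $\delta=1$ substituted \emph{before} $\lambda\to\infty$, which is the order of limits of Theorem \ref{Converg_unif-1}, not of Theorem \ref{Converg_unif}; the interchange of the two limiting operations, which you correctly identify as ``the real content of this theorem,'' is exactly what your proposal leaves unproved.

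For comparison, the paper does not work on the vertical Mellin--Barnes contour at all: it starts from the Hankel-contour representation (\ref{Hankel}), expands $e^{-\tau^{2/\delta}\gamma^{2}z^{2}/(4s)}$ into a power series, evaluates the $\tau$-integral as a Beta function and the $s$-integral via $1/\Gamma(d/2+n)$, and then lets $\lambda\to\infty$ term by term while \emph{retaining} the combination $\alpha\lambda^{1-1/\delta}$ as the effective scale; only afterwards is $\delta\to1$ taken (via the duplication formula and a binomial resummation), which collapses $\lambda^{1-1/\delta}$ to $1$. In other words the paper's argument is really a coupled asymptotic in $(\lambda,\delta)$ rather than a literal iterated limit, and any proof of (\ref{limDini}) has to confront the degeneration at fixed $\delta\neq1$ head on; your proposal does not.
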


Prior to discussing the proof, some comments are in order. Theorem \ref{Converg_unif-1} and \ref{Converg_unif} show a convergence type argument for the spectral density associated with the
Generalized Cauchy covariance function. Further, the two theorems can be used to argue that the we have exchangeability of the double limit that is taken in both theorems. A proof is provided below.

\begin{proof}
A constructive proof is provided. Let $\hat{{\cal C}}_{\delta,\lambda,\gamma}$ be the Generalized Cauchy spectral density as defined at (\ref{kondrachuck0}).
We invoke the  Mellin Barnes integral \cite{Allendes:2012mr,Faouzi_2020} in concert with Equation (\ref{Hankel}) to write $\hat{{\cal C}}_{\delta,\lambda,\gamma}$ as
\begin{equation}
\begin{aligned}
\widehat{\cal C}_{\delta,\lambda,\alpha\lambda }(z)&=\frac{ (\alpha\lambda)^d}{2^{d-1}\delta\pi^{d/2}(2\pi~\mathsf{i})}\oint_{H}~s^{-d/2}e^{s} \Bigg ( \int_0^\infty ~
\frac{\tau^{d/\delta-1}}{(1+\tau)^{\lambda}}\displaystyle\sum_{n=0}^\infty\frac{(-1)^n}{n!}\left(\frac{\tau^{2/\delta}(\alpha\lambda)^2~z^2}{4s}\right)^n \text{d}\tau  \Bigg )  \text{d}s\\
&=\frac{(\alpha\lambda)^d}{2^{d-1}\delta\pi^{d/2}(2\pi~\mathsf{i})}\displaystyle\sum_{n=0}^\infty\frac{(-1)^n}{n!}\left(\frac{\alpha\lambda z}{2}\right)^{2n}\oint_{H}~s^{-d/2-n}e^{s}\Bigg ( \int_0^\infty  ~
\frac{\tau^{d/\delta+2n/\delta-1}}{(1+\tau)^{\lambda}} \text{d}\tau \Bigg ) \text{d}s.\\
\end{aligned}
\end{equation}
We now note that, for $\lambda$ sufficiently large, we have
$$\int_0^\infty~\frac{\tau^{d/\delta+2n/\delta-1}}{(1+\tau)^{\lambda}} \text{d}\tau = \frac{\Gamma(\lambda-d/\delta-2n/\delta)\Gamma(d/\delta+2n/\delta)}{\Gamma(\lambda)}.$$
Thus, we write
\begin{equation}
\begin{aligned}
\widehat{\cal C}_{\delta,\lambda,\alpha\lambda}(z)&\sim\frac{ (\alpha\lambda)^d}{2^{d-1}\delta\pi^{d/2}(2\pi~\mathsf{i})}\displaystyle\sum_{n=0}^\infty\frac{(-1)^n}{n!}
\left(\frac{\alpha\lambda z}{2}\right)^{2n}\oint_{H}~s^{-d/2-n}e^{s}\frac{\Gamma(\lambda-d/\delta-2n/\delta)\Gamma(d/\delta+2n/\delta)}{\Gamma(\lambda)} \text{d}s\\
&=\frac{\le\alpha\lambda^{1-1/\delta}\ri^d}{2^{d-1}\delta\pi^{d/2}}\displaystyle\sum_{n=0}^\infty\frac{(-1)^n}{n!}\left(\frac{\alpha\lambda^{1-1/\delta}z}{2}\right)^{2n}\frac{\lambda^{d/\delta+2n/\delta}
\Gamma(\lambda-d/\delta-2n/\delta)\Gamma(d/\delta+2n/\delta)}{\Gamma(\lambda)\Gamma(d/2+n)},\\
\end{aligned}
\end{equation}
Direct inspection shows that $$ \underset{\lambda\to\infty}{\lim}\frac{\Gamma(\lambda-d/\delta-2n/\delta)\lambda^{d/\delta+2n/\delta}}{\Gamma(\lambda)}=1.$$ Hence,
\begin{equation}
\begin{aligned}
\widehat{\cal C}_{\delta,\lambda,\alpha\lambda}(z)&\sim\frac{ \le\alpha\lambda^{1-1/\delta}\ri^d}{2^{d-1}\delta\pi^{d/2}}\displaystyle\sum_{n=0}^\infty\frac{(-1)^n}{n!}\frac{\Gamma(d/\delta+2n/\delta)}{\Gamma(n+d/2)}
\left(\frac{\alpha\lambda^{1-1/\delta} z}{2}\right)^{2n}.\\
\end{aligned}
\end{equation}
{We note a classic identity for the convergent series given by $$(1+ax)^{-b}=\displaystyle\sum_{k=0}^{\infty}\frac{(-1)^k(b)_k}{k!}(ax)^k.$$}
The identity above shows the result by letting  $\delta$ tend to 1.
Indeed, $$\underset{\delta\to 1}{\lim}\frac{\Gamma(d/\delta+2n/\delta)}{\Gamma(d/2+n)}=2^{d-1+2n}\Gamma(d/2+n+1/2)/\pi^{1/2}, $$ which in turn implies
$$\underset{\delta\to 1}{\lim}\underset{\lambda\to \infty}{\lim}\widehat{\cal C}_{\delta,\lambda,\alpha\lambda}(z)
= \frac{ \alpha^d}{\pi^{(d+1)/2}}\displaystyle\sum_{n=0}^\infty\frac{(-1)^n}{n!}\Gamma(n+(d+1)/2)\left(\alpha~z\right)^{2n} = \frac{ \alpha^d\Gamma\le d/2 + 1/2 \ri}{\pi^{(d+1)/2}}
\frac{1}{(1 + \alpha^2 z^2)^{(d+1)/2}}.$$
The proof is completed. \end{proof}

We start by recalling the Cauchy uniform criterion of convergence. This will be crucial to prove part of the results following subsequently.

\begin{theorem}\label{UCC}[Uniform Cauchy criterion]
Let $(\mathbb{R},|\cdot|)$ be a complete metric space. Then, a sequence of functions $f_n,$ $n\in\mathbb{N},$  converges uniformly in the sense of Cauchy on the positive real line if and only if
$${\displaystyle \forall \varepsilon> 0 \quad \exists N_{\varepsilon} \in \mathbb{N} \quad \text{s.t.} \quad \forall m, n \in \mathbb{N} \quad \left[m, n \geq N_{\varepsilon} \Rightarrow
\forall x \in \R^+ \quad |f_{m} (x)- f_{n}(x)| \leq \varepsilon \right]}.$$
\end{theorem}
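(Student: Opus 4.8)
The plan is to treat this as the classical equivalence between uniform convergence and the uniform Cauchy property, and to prove the two implications separately and directly.

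First I would establish the forward implication. Assuming $f_n$ converges uniformly to some limit function $f$ on $\R^+$, I fix $\varepsilon > 0$ and choose $N_\varepsilon \in \mathbb{N}$ so that $\sup_{x \in \R^+} |f_n(x) - f(x)| \le \varepsilon/2$ for all $n \ge N_\varepsilon$. Then for any $m, n \ge N_\varepsilon$ and any $x \in \R^+$ the triangle inequality gives $|f_m(x) - f_n(x)| \le |f_m(x) - f(x)| + |f(x) - f_n(x)| \le \varepsilon$, which is exactly the displayed condition.

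For the converse I would argue in two stages. \emph{Stage one:} fix $x \in \R^+$; the displayed condition says precisely that $(f_n(x))_{n \in \mathbb{N}}$ is a Cauchy sequence of real numbers, and since $(\R,|\cdot|)$ is complete this sequence converges; call its limit $f(x)$. Carrying this out for every $x$ produces a candidate limit function $f : \R^+ \to \R$. \emph{Stage two:} I must upgrade pointwise convergence to uniform convergence. Given $\varepsilon > 0$, take the $N_\varepsilon$ supplied by the hypothesis. For any fixed $n \ge N_\varepsilon$ and any fixed $x \in \R^+$, the inequality $|f_m(x) - f_n(x)| \le \varepsilon$ holds for every $m \ge N_\varepsilon$; letting $m \to \infty$ and using that $f_m(x) \to f(x)$ together with the fact that non-strict inequalities are preserved under limits, I obtain $|f(x) - f_n(x)| \le \varepsilon$. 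Since this bound does not depend on $x$, it follows that $\sup_{x \in \R^+} |f_n(x) - f(x)| \le \varepsilon$ for all $n \ge N_\varepsilon$, i.e. $f_n \to f$ uniformly on $\R^+$.

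The only genuinely delicate point is Stage two of the converse: one has to hold $n$ fixed while sending $m \to \infty$ inside the inequality, and rely on the closedness of the relation "$\le \varepsilon$" under limits — the argument would break for the strict inequality. Everything else reduces to the triangle inequality and the definition of completeness, so I do not anticipate any substantive obstacle beyond keeping careful track of the order of the quantifiers.
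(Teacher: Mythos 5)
Your proof is correct and is the standard textbook argument: the forward direction by the triangle inequality, and the converse by first extracting a pointwise limit from completeness of $\R$ and then passing to the limit $m\to\infty$ in the non-strict inequality $|f_m(x)-f_n(x)|\le\varepsilon$ to obtain a bound uniform in $x$. The paper itself offers no proof of this statement --- it is explicitly recalled as a classical criterion and used as a black box in the proof of Theorem~\ref{Converg_Covariance} --- so there is nothing to compare against; your identification of the limit passage in the closed inequality as the one delicate step is exactly right.
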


\begin{theorem}\label{Converg_Covariance}
Let $\lambda>0$  and $\delta\in (d/2,2]$. Let
${\cal C}_{\delta,\lambda,\gamma}$ be  the Generalized Cauchy function in Equation (\ref{averrohe}), and let ${\cal M}_{\nu,\alpha}$  be the Mat\'ern function as in Equation (\ref{Matern}). Then, for  $d=1,2$,
$$ \underset{\delta \to 1}{\lim} ~  \underset{\lambda\to \infty}{\lim}        {\cal C}_{\delta,\lambda, \alpha\lambda }(r)= {\cal M}_{\frac{1}{2},\alpha}(r),$$
with uniform convergence  for $r\in (0,\infty)$.
\end{theorem}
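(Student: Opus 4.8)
The plan is to peel the two limits apart and control each one uniformly in $r$. Since ${\cal K}_{1/2}(x)=\sqrt{\pi/(2x)}\,e^{-x}$, the target is simply ${\cal M}_{\frac{1}{2},\alpha}(r)=e^{-r/\alpha}$; hence it is enough to show (i) that for each fixed $\delta\in(d/2,2]$ the inner limit $g_{\delta}(r):=\lim_{\lambda\to\infty}{\cal C}_{\delta,\lambda,\alpha\lambda}(r)$ exists and equals a powered-exponential correlation, with the convergence uniform on $(0,\infty)$, and (ii) that $g_{\delta}(r)\to e^{-r/\alpha}$ uniformly in $r\in(0,\infty)$ as $\delta\to 1$. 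These two uniform statements are then chained by means of the Uniform Cauchy criterion of Theorem \ref{UCC}: from $\sup_{r>0}|{\cal C}_{\delta,\lambda,\alpha\lambda}(r)-{\cal M}_{\frac{1}{2},\alpha}(r)|\leq\sup_{r>0}|{\cal C}_{\delta,\lambda,\alpha\lambda}(r)-g_{\delta}(r)|+\sup_{r>0}|g_{\delta}(r)-{\cal M}_{\frac{1}{2},\alpha}(r)|$ one lets $\lambda\to\infty$ and then $\delta\to 1$.

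For (i) I would expand the Generalized Cauchy function through the convergent binomial series $(1+ax)^{-b}=\sum_{k\geq 0}\frac{(-1)^{k}(b)_{k}}{k!}(ax)^{k}$ already invoked in the proof of Theorem \ref{Converg_unif}, with $b=\lambda$ and $ax=(r/(\alpha\lambda))^{\delta}$, and pass to the limit term by term using $\Gamma(\lambda+k)/\Gamma(\lambda)\sim\lambda^{k}$, exactly the asymptotics used there, to identify $g_{\delta}(r)=e^{-(r/\alpha)^{\delta}}$. The uniformity, which is the delicate point, I would extract from the two-sided elementary bound $e^{-x}\leq(1+x/\lambda)^{-\lambda}\leq e^{-\lambda x/(\lambda+x)}$, obtained by applying $t/(1+t)\leq\ln(1+t)\leq t$ to $t=x/\lambda$ with $x=(r/\alpha)^{\delta}\in(0,\infty)$; this gives $0\leq{\cal C}_{\delta,\lambda,\alpha\lambda}(r)-e^{-(r/\alpha)^{\delta}}\leq e^{-x}\bigl(e^{x^{2}/(\lambda+x)}-1\bigr)$, and a short one-variable estimate shows the right-hand side tends to $0$ uniformly in $x$ as $\lambda\to\infty$; since $r\mapsto x=(r/\alpha)^{\delta}$ is a bijection of $(0,\infty)$, this is precisely $\sup_{r>0}|{\cal C}_{\delta,\lambda,\alpha\lambda}(r)-g_{\delta}(r)|\to 0$, i.e. $({\cal C}_{\delta,\lambda,\alpha\lambda})_{\lambda}$ is uniformly Cauchy on $\R^{+}$ in the sense of Theorem \ref{UCC}. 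A second, more spectral route to (i), in the spirit of Theorems \ref{Converg_unif-1} and \ref{Converg_unif}, is to combine their pointwise convergence of the spectral densities with the convergence of the total masses $\widehat{\cal C}_{\delta,\lambda,\alpha\lambda}(0)$, deduce $L^{1}$-convergence of the spectral densities by Scheff\'e's lemma, and transfer it to uniform convergence of the covariances using that the Hankel kernel $\Omega_{d}$ is bounded by $1$.

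For (ii) I would let $\delta\to 1$ in $g_{\delta}(r)=e^{-(r/\alpha)^{\delta}}$: pointwise convergence to $e^{-r/\alpha}={\cal M}_{\frac{1}{2},\alpha}(r)$ is immediate, and for uniformity I would work on the two-point compactification $[0,+\infty]$ of the half-line. Writing $s=r/\alpha$, the functions $s\mapsto e^{-s^{\delta}}$ are continuous on $[0,+\infty]$ and, for each fixed $s$, converge monotonically to the continuous limit $s\mapsto e^{-s}$ as $\delta\to 1$ (increasing in $\delta$ when $s<1$, decreasing when $s>1$), so $|e^{-s^{\delta}}-e^{-s}|$ decreases to $0$ pointwise and Dini's theorem makes the convergence uniform. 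Feeding (i) and (ii) into the triangle inequality displayed above then yields $\lim_{\delta\to 1}\lim_{\lambda\to\infty}{\cal C}_{\delta,\lambda,\alpha\lambda}(r)={\cal M}_{\frac{1}{2},\alpha}(r)$ uniformly for $r\in(0,\infty)$.

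I expect the genuine obstacle to be precisely the uniformity in step (i), not the routine termwise or pointwise convergence: near $r=0$ both ${\cal C}_{\delta,\lambda,\alpha\lambda}$ and its limit are close to $1$, and for large $r$ both are small, but at finite $\lambda$ the heavy polynomial Cauchy tail and the light exponential tail decay at genuinely different rates, so the largest discrepancy lives at an intermediate range of $r$ that drifts with $\lambda$; controlling the supremum there is the crux, and is exactly where the quantitative two-sided bound above, or equivalently the Uniform Cauchy criterion, is indispensable. By comparison, the passage $\delta\to 1$ in (ii) is comparatively soft once Dini's theorem is applied on the compactified half-line.
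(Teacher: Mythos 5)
There is a genuine gap, and it sits exactly where you predicted the crux would be: in the identification of the inner limit. You set $x=(r/\alpha)^{\delta}$ and apply the bound $e^{-x}\le(1+x/\lambda)^{-\lambda}\le e^{-\lambda x/(\lambda+x)}$, but the function you are bounding is ${\cal C}_{\delta,\lambda,\alpha\lambda}(r)=\bigl(1+(r/(\alpha\lambda))^{\delta}\bigr)^{-\lambda}$, and $(r/(\alpha\lambda))^{\delta}=(r/\alpha)^{\delta}\lambda^{-\delta}$, \emph{not} $(r/\alpha)^{\delta}\lambda^{-1}$. The correct exponential asymptotics are $-\lambda\log\bigl(1+(r/\alpha)^{\delta}\lambda^{-\delta}\bigr)\sim-(r/\alpha)^{\delta}\lambda^{1-\delta}$, so for fixed $\delta\in(d/2,1)$ the inner limit is $g_{\delta}\equiv 0$ on $(0,\infty)$, and for fixed $\delta\in(1,2]$ it is $g_{\delta}\equiv 1$; only at $\delta=1$ exactly does one obtain $e^{-r/\alpha}$. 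Your claimed $g_{\delta}(r)=e^{-(r/\alpha)^{\delta}}$ is therefore not the inner limit under the paper's reparameterization $\gamma=\alpha\lambda$ (it would be the inner limit under $\gamma=\alpha\lambda^{1/\delta}$). Once $g_{\delta}$ degenerates to a constant off $\delta=1$, step (ii) and the Dini argument have nothing to act on, and the triangle-inequality chaining of the two limits collapses: the term $\sup_{r>0}|g_{\delta}(r)-{\cal M}_{\frac12,\alpha}(r)|$ does not tend to $0$ as $\delta\to1$. The same binomial-series manipulation with $\Gamma(\lambda+k)/\Gamma(\lambda)\sim\lambda^{k}$ hides the same error: the $k$-th term carries $\lambda^{k}\cdot\lambda^{-k\delta}=\lambda^{k(1-\delta)}$, which does not converge termwise to $(r/\alpha)^{k\delta}/k!$ unless $\delta=1$. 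Your elementary two-sided bound is sound and does deliver uniform convergence of $(1+r/(\alpha\lambda))^{-\lambda}$ to $e^{-r/\alpha}$ at $\delta=1$ (which is what the paper's Table~\ref{tabxxx} and Figure~\ref{cont22} actually illustrate), but it does not prove the iterated limit as stated.

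For comparison, the paper does not attempt your decomposition on the covariance functions at all: its proof works entirely on the spectral-density side, bounding $|\widehat{\cal C}_{\delta,\lambda_m,\alpha\lambda_m}(z)-\widehat{\cal C}_{\delta,\lambda_n,\alpha\lambda_n}(z)|$ via the Bessel inequality $|J_{d/2-1}(rz)|\le|rz|^{d/2-1}/(2^{d/2-1}\Gamma(d/2))$, reducing the integrals to Beta functions, and invoking the uniform Cauchy criterion of Theorem~\ref{UCC}; the resulting bound is governed by $\alpha^{d}\lambda^{d(1-1/\delta)}$, which is precisely the quantity whose behaviour depends on the sign of $\delta-1$. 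Your alternative ``spectral route'' via Scheff\'e's lemma (pointwise convergence of densities plus conservation of total mass $\widehat{\cal C}_{\delta,\lambda,\alpha\lambda}$ integrating to $\phi(0)=1$, hence $L^{1}$ convergence, hence uniform convergence of the covariances through the bounded kernel $\Omega_{d}$) is a genuinely cleaner transfer mechanism than the paper's, but it inherits the same difficulty, since the inner pointwise limit of the spectral densities at fixed $\delta\neq1$ is likewise degenerate (the spectral mass concentrates at the origin for $\delta>1$ and escapes to infinity for $\delta<1$). To repair your argument you would need either to couple the two limits (e.g.\ take $\delta\to1$ and $\lambda\to\infty$ along a path on which $\lambda^{1-\delta}$ stays bounded away from $0$ and $\infty$, or equivalently reparameterize the scale as $\alpha\lambda^{1/\delta}$), or to restrict the covariance-side analysis to $\delta=1$, where your quantitative bound is exactly the right tool.
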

\begin{proof}
We need to show that $\widehat{\cal C}_{\delta,\lambda,\alpha\lambda}$ is uniformly convergent  with respect to $\lambda$ and $\delta$.
Let us consider an increasing sequence $\{ \lambda_n\}_{n=0}^{\infty}$ of natural numbers.
 Then, we write
\begin{equation}
\begin{aligned}
\left|\widehat{\cal C}_{\delta, \lambda_m, \alpha\lambda_m}(z)-\widehat{\cal C}_{\delta, \lambda_n, \alpha\lambda_n}(z)\right|& =
\frac{z^{1-d/2}}{\pi^{d/2}}\left|\int_0^\infty \frac{J_{d/2-1}(rz)r^{d/2}}{\left (1+(\frac{r}{\alpha\lambda_m})^\delta \right ) ^{\lambda_m}}\text{d}r
-\int_0^\infty \frac{J_{d/2-1}(rz)r^{d/2}}{\left(1+(\frac{r}{\alpha\lambda_n })^\delta\right)^{\lambda_n}}\text{d}r\right|\\
&\leq \frac{z^{1-d/2}}{\pi^{d/2}}\int_0^\infty \left|\frac{J_{d/2-1}(rz)r^{d/2}}{\left(1+(\frac{r}{\alpha\lambda_m})^\delta\right)^{\lambda_m}}
- \frac{J_{d/2-1}(rz)r^{d/2}}{\left(1+(\frac{r}{\alpha\lambda_n})^\delta\right)^{\lambda_n}}\right|\text{d}r.
\end{aligned}
\end{equation}
Using the well known inequality \citep{MR2723248} $|J_{d/2-1}(rz)|\leq \frac{|rz|^{d/2-1}}{2^{d/2-1}\Gamma(d/2)}$, we have
\begin{equation}
\begin{aligned}
& \left|\widehat{\cal C}_{\delta, \lambda_m, \alpha\lambda_m}(z)-\widehat{\cal C}_{\delta, \lambda_n, \alpha\lambda_n}(z)\right] \\
&\leq \frac{1}{\pi^{d/2}2^{d/2-1}\Gamma(d/2)}\left| \int_0^\infty \left[\frac{r^{d-1}}{\left(1+(\frac{r}{\alpha\lambda_m})^\delta\right)^{\lambda_m}}
- \frac{r^{d-1}}{\left(1+(\frac{r}{\alpha\lambda_n})^\delta\right)^{\lambda_n}}\right]\text{d}r\right|\\
& = \frac{1}{\pi^{d/2}2^{d/2-1}\Gamma(d/2)} \left| \int_0^\infty \left[\frac{\le\alpha\lambda_m\ri^d r^{d-1}}{\left(1+r^\delta\right)^{\lambda_m}}
-  \frac{\le\alpha\lambda_n\ri^d   r^{d-1}}{\left(1 + r^\delta\right)^{\lambda_n}}\right]\text{d}r \right| \\
& = \frac{1}{\delta\pi^{d/2}2^{d/2-1}\Gamma(d/2)} \left| \int_0^\infty \left[\frac{\le\alpha\lambda_m\ri^d r^{d/\delta-1}}{\left(1+r\right)^{\lambda_m}}
-  \frac{\le\alpha\lambda_n\ri^d   r^{d/\delta-1}}{\left(1 + r\right)^{\lambda_n}}\right]\text{d}r \right| \\
& = \frac{1}{\delta\pi^{d/2}2^{d/2-1}\Gamma(d/2)} \left|\le\alpha\lambda_m\ri^d \text{B}\le d/\delta, \lambda_m -d/\delta \ri  - \le\alpha\lambda_n\ri^d \text{B}\le d/\delta, \lambda_n -d/\delta \ri \right| \\
& = \frac{\Gamma( d/\delta)}{\delta\pi^{d/2}2^{d/2-1}\Gamma(d/2)} \left|\le\alpha\lambda_m\ri^d \frac{\Gamma(\lambda_m -d/\delta)}{\Gamma(\lambda_m)}  - \le\alpha\lambda_n\ri^d \frac{\Gamma(\lambda_n -d/\delta)}{\Gamma(\lambda_n)}  \right| \\
& \leq \frac{\Gamma( d/\delta)}{\delta\pi^{d/2}2^{d/2-1}\Gamma(d/2)}\left|\le\alpha\lambda_m\ri^d \lambda_m^{-d/\delta} - \le\alpha\lambda_n\ri^d \lambda_n^{-d/\delta} \right|
\end{aligned}
\end{equation}

Then, for $\delta<1$ with a higher  frequency of $\lambda_i$, $i\in\{n,m\}$,  we obtain that
$${\displaystyle \forall \varepsilon> 0 \quad \exists N_{\varepsilon} \in \mathbb{N}
\quad \forall m, n \in \mathbb{N} \quad \left[m, n \geq N_{\varepsilon} \Rightarrow \forall x \in \R^+ \quad |\widehat{\cal C}_{\delta,\lambda_m,\alpha\lambda_m}(z)- \widehat{\cal C}_{\delta,\lambda_n,\alpha\lambda_n }(z)|
\leq \varepsilon \right]}.$$
 Applying Theorem~\ref{UCC}, we have the result given in Equation~ (\ref{limDini}). The proof is completed.
\end{proof}
Some comments are in order. In the absence of theoretical rates of convergence, we show some simple numerical results on the convergence of the
Generalized Cauchy model ${\cal C}_{1,\lambda, \alpha\lambda }(r)$ to the  Mat{\'e}rn model ${\cal M}_{\frac{1}{2},\alpha}(r)$  when  $\lambda \to \infty$.
Specifically, we analyze  the absolute  error (AE)
\begin{equation}\label{abserr}
E_{\lambda}(r):= |{\cal C}_{1,\lambda, \alpha\lambda }(r) -{\cal M}_{\frac{1}{2},\alpha}(r)|,   \quad r\geq0,
\end{equation}
 when increasing $\lambda$ for  given    values of $\alpha$.

According to Theorem  \ref{Converg_Covariance}, Table  \ref{tabxxx}  depicts  the convergence of the Generalized Cauchy model to    the Mat{\'e}rn model
by reporting the maximum   of the absolute error   $E_{\lambda}(r)$, denoted MAE, when increasing $\lambda$
for different values of $\alpha$.
In particular Table  \ref{tabxxx}   shows that
the MAE approaches zero
when increasing  $\lambda$, as expected. In addition, it can be appreciated that the larger the value of $\alpha$, the faster the convergence  of the Generalized Cauchy model to    the  Mat{\'e}rn model.

   We also depict  a graphical representation for a specific value of $\alpha=0.2/3$
and increasing $\lambda$.
 In particular, in Figure  \ref{cont22} (left part) we plot  ${\cal C}_{1,\lambda, \alpha\lambda }$, 
 for $\lambda=1, 2, 5, 20$
 and ${\cal M}_{1/2,\alpha}$.
 The right part of Figure  \ref{cont22} displays the associated values of $E_{\lambda}$.
 From both Figures,  it can be graphically appreciated the convergence
 under this specific setting.

\begin{table}[h!]
\caption{Maximum of    $E_{\lambda}$
as defined in (\ref{abserr})  when increasing $\lambda$ for increasing value of $\alpha$.} \label{tabxxx}
\begin{center}
\scalebox{1}{
\begin{tabular}{|c||c|c|c|c|c|c|c|c|c|}
  \hline
$\lambda$     &     $1$  & 2&    $5$   &   $20$    &  $60$     & $160$   &  $390$   &   $700$    &  $2000$\\  \hline\hline
$\alpha=0.02/3$   &   0.20363 &0.11608& 0.05075& 0.01331& 0.00449& 0.00169& 0.00069 &0.00039 &0.00014\\
$\alpha=0.2/3$ &    0.20363 &0.11608& 0.05075& 0.01331& 0.00449& 0.00169& 0.00069 &0.00039 &0.00014\\
$\alpha=2/3$   &    0.17687 &0.10340& 0.04620& 0.01228& 0.00415& 0.00156& 0.00064& 0.00036 &0.00013\\
$\alpha=20/3$   &0.00886 &0.00462 &0.00190& 0.00048& 0.00016& 0.00006 &0.00002 &0.00001 &0.00000\\

  \hline
\end{tabular}
}
\end{center}
\end{table}

\begin{figure}[h!]
\begin{tabular}{cc}
\includegraphics[width=7.9cm, height=7.5cm]{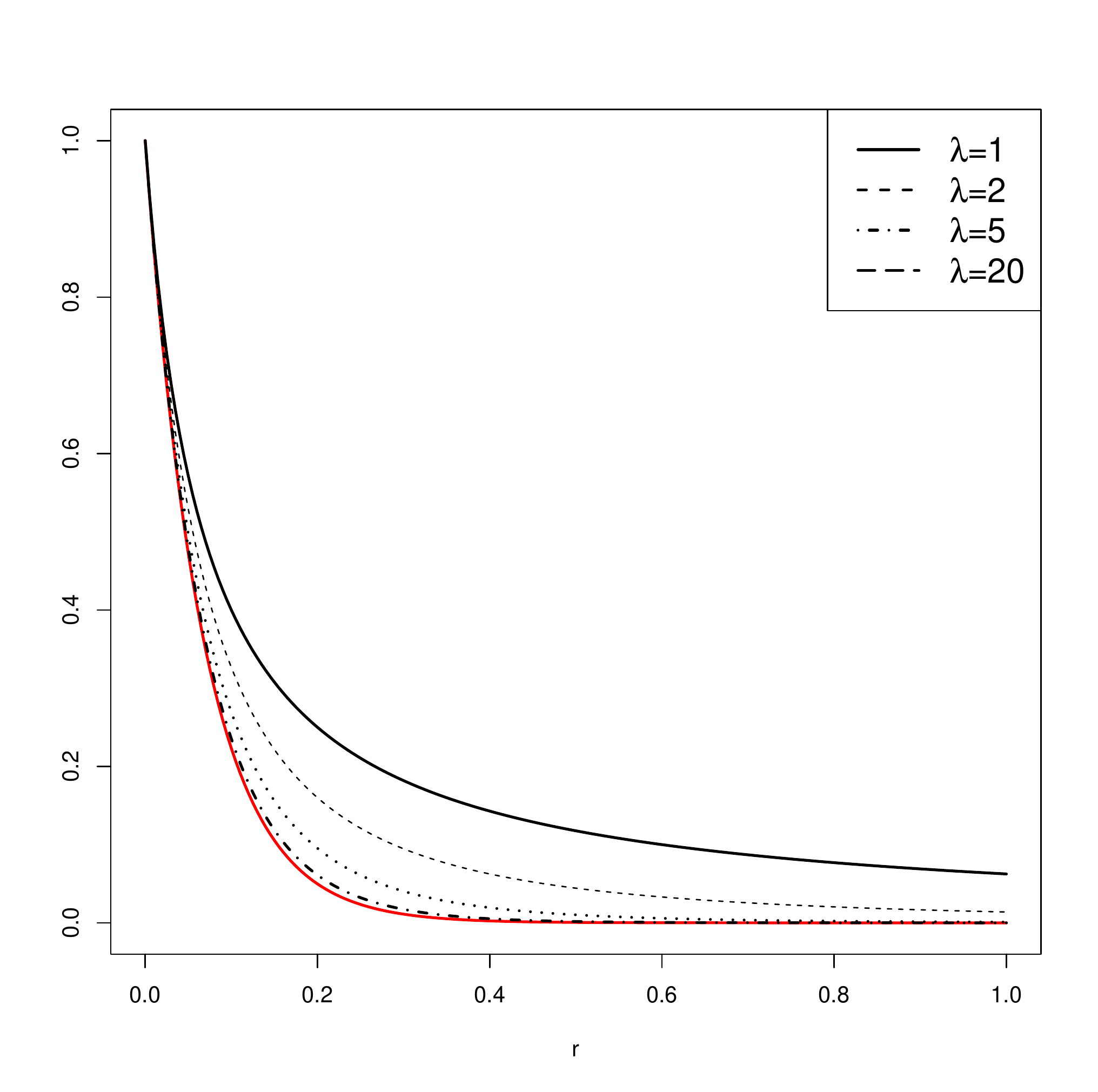} & \includegraphics[width=7.9cm, height=7.5cm]{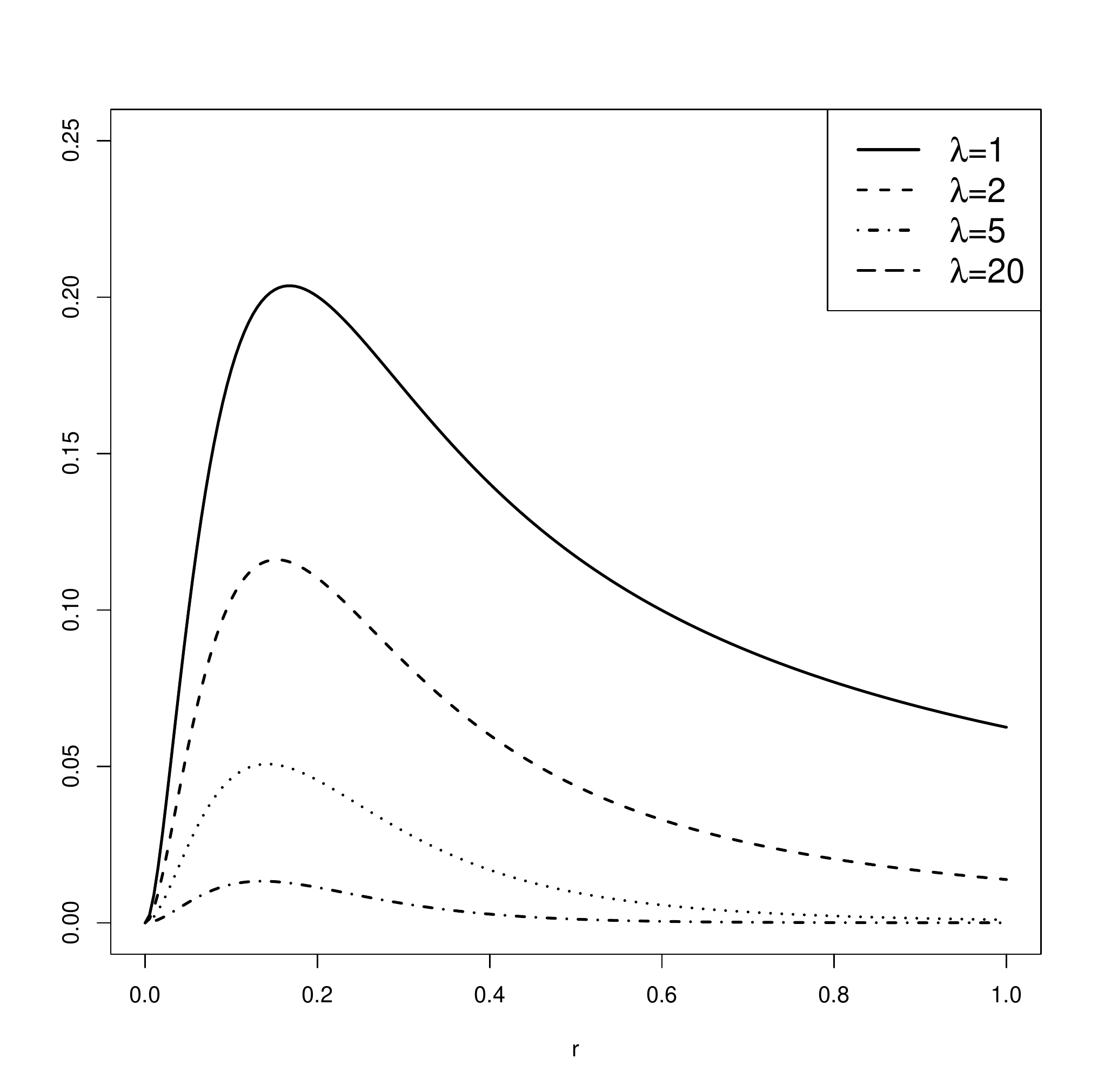} \
\end{tabular}
\caption{Left part: the  ${\cal C}_{1,\lambda, \alpha\lambda }(r)$ model  with $\lambda=1, 2, 5, 20$ and $\lambda \to \infty$ $i.e.$ the
${\cal M}_{\frac{1}{2},\alpha}(r)$ model (red color)
with $\alpha=0.3$.
Right part: associated absolute value error $E_{\lambda}(r)$ as defined in (\ref{abserr})}  \label{cont22}
\end{figure}

\section{Conclusion}\label{sec5}

The present paper provides an interesting bridge between models indexing fractal dimensions and Hurst effects with models that allow to index differentiability at the origin,
and hence mean square differentiability of a Gaussian random field with a given covariance function. We understand that the result obtained this paper applies to a special case
of the Generalized Cauchy function only. It could not be otherwise, as attaining convergence of a long memory process to a process having a covariance function with light tails would be counterintuitive. \\
The techniques used in this paper might open for similar works devoted to bridging different classes of covariance functions, in the spirit of \cite{bevilacqua2022unifying} and \cite{emery2021gauss}. \\
For future researches, it would be extremely useful to bridge different classes of space-time covariance functions.

\section*{Acknowledgement}
Emilio Porcu is grateful to Maaz Musa Shallal for interesting discussions during the preparation of this manuscript.
This paper is based upon work supported by the Khalifa University of Science and Technology under Award No. FSU-2021-016 (E.Porcu).
Partial support was provided  in part by FONDECYT grant 11200749 and in part by grant DIUBB 2020525 IF/R from the university of B\'io B\'io for Tarik Faouzi.
Partial support was provided by FONDECYT grant 1200068 of Chile and by ANID/PIA/ANILLOS ACT210096 for Moreno Bevilacqua.
The work of I.K. was supported in part by Fondecyt (Chile) Grants No. 1121030 and by DIUBB (Chile) Grants No. 2020432 IF/R and  GI 172409/C.

\bibliographystyle{apalike}
\bibliography{fpkb_july_2022}

\end{document}